\documentclass{amsart}[12pt]

\usepackage{amsmath, amsthm, amscd, amsfonts,}
\usepackage{graphicx,float}
\usepackage{amssymb}
\usepackage[cmtip,arrow]{xy}
\usepackage{pb-diagram,pb-xy}

%
%







\DeclareMathOperator{\dom}{dom}

\DeclareMathOperator{\ran}{ran}

\def\MRB{{\mathbb{R}}}
\def\MCB{{\mathbb{C}}}

\def\lan{\langle}
\def\ran{\rangle}
\def\a{\alpha}

\def\ov{\overline}

\setlength{\textheight}{22cm} \setlength{\textwidth}{14cm}
\setlength{\oddsidemargin}{1cm} \setlength{\evensidemargin}{1cm}

\newtheorem{theorem}{Theorem}[section]
\newtheorem{lemma}[theorem]{Lemma}



\numberwithin{equation}{section}

\usepackage{pb-diagram}

\def\lan{\langle}
\def\ran{\rangle}
\def\ov{\bar}
\def\rmark{\mbox{$\rm\bf\rule{0.06em}{1.45ex}\kern-0.05em R$}}
\def\pmark{\mbox{$\rm\bf\rule{0.06em}{1.45ex}\kern-0.05em P$}}
\def\nmark{\mbox{$\rm\bf\rule{0.06em}{1.45ex}\kern-0.05em N$}}
\def\vdash{\mbox{$\rm\| \kern-0.13em -$}}

\def\k{\kappa}

\def\lan{\langle}
\def\ran{\rangle}
\def\a{\alpha}

\def\ov{\overline}

\def\lan{\langle}
\def\ran{\rangle}
\def\ov{\bar}
\def\rmark{\mbox{$\rm\bf\rule{0.06em}{1.45ex}\kern-0.05em R$}}
\def\pmark{\mbox{$\rm\bf\rule{0.06em}{1.45ex}\kern-0.05em P$}}
\def\nmark{\mbox{$\rm\bf\rule{0.06em}{1.45ex}\kern-0.05em N$}}
\def\vdash{\mbox{$\rm\| \kern-0.13em -$}}
\newcommand{\lusim}[1]{\smash{\underset{\raisebox{1.2pt}[0cm][0cm]{$\sim$}}
{{#1}}}}

\title[Adding many random reals may add many Cohen reals]{Adding many random reals may add many Cohen reals}

\author[M. Golshani]{Mohammad Golshani}

\date{}

\thanks{The  author's research has been supported by a grant from IPM (No. 95030417). He also thanks Moti Gitik for his useful comments and suggestions.}


\begin{document}
\begin{abstract}
Let $\kappa$ be an infinite cardinal. Then, forcing with $\MRB(\kappa) \times \MRB(\k)$ adds a generic filter for $\MCB(\kappa);$
where $\MRB(\kappa)$ and $\MCB(\kappa)$ are the forcing notions for adding $\k$-many random reals and adding $\k$-many Cohen reals respectively.
\end{abstract}

\thanks{ } \maketitle

\section{Introduction}
For a cardinal $\k >0$ let $\MRB(\kappa)$ be the forcing notion for adding $\k$-many random reals and let $\MCB(\kappa)$ be the Cohen forcing
 for adding $\k$-many Cohen reals\footnote{See Section \ref{forcings} for the definition of the forcing notions $\MRB(\k)$ and $\MCB(\k)$.}.

It is a well-know fact that forcing with $\MRB(1) \times \MRB(1)$ adds a Cohen real; in fact, if $r_1, r_2$
are the added random reals, then $c=r_1 + r_2$ is Cohen \cite{judah}. This in turn implies all reals $c+a,$ where $a \in \MRB^V,$ are Cohen,
and so, we have continuum many Cohen reals over $V$. However, the sequence $\langle  c+a: a \in \MRB^V   \rangle$
fails to be $\MCB((2^{\aleph_0})^V)$-generic over $V$. In fact, there is no sequence $\langle  c_i: i < \omega_1 \rangle \in V[r_1, r_2]$
of Cohen reals which is $\MCB(\omega_1)$-generic over $V$.

 In this paper, we extend the above mentioned result by showing that if we force with
 $\MRB(\kappa) \times \MRB(\k)$,  then in the resulting extension, we can find a sequence
 $\langle c_i: i<\k  \rangle$ of reals which is  $\MCB(\kappa)$-generic over the ground model:
 \begin{theorem}
 \label{main theorem}
 Let $\kappa$ be an infinite cardinal. Then, forcing with $\MRB(\kappa) \times \MRB(\k)$ adds a generic filter for $\MCB(\kappa).$
 \end{theorem}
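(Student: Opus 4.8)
The plan is to realise the desired generic filter as the image of the $\MRB(\kappa)\times\MRB(\kappa)$-generic object under a projection into $\MCB(\kappa)$, and to verify genericity by a countable-support reduction that feeds into the one-dimensional fact quoted in the introduction. Write the generic as two arrays $\langle r_\alpha:\alpha<\kappa\rangle$ and $\langle s_\alpha:\alpha<\kappa\rangle$ of random reals, the whole family $\langle r_\alpha,s_\alpha:\alpha<\kappa\rangle$ being a single random point for the product measure. Two standard consequences of this product structure are used throughout: any subfamily indexed by a set of $V$ is again mutually random, and for each $\alpha$ the pair $(r_\alpha,s_\alpha)$ is $\MRB(1)\times\MRB(1)$-generic over $W_\alpha:=V[\langle r_\beta,s_\beta:\beta\neq\alpha\rangle]$. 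Since $\MCB(\kappa)$ satisfies the c.c.c., every maximal antichain $A\in V$ has countable support $T_A\in V$, and $A\restrict T_A$ is then a maximal antichain of $\MCB(T_A)\cong\MCB(\omega)$; so it suffices to build a sequence $\langle c_\alpha:\alpha<\kappa\rangle$ whose generated filter meets every such $A$. For $\kappa\leq\omega$ there is nothing beyond the quoted fact to do, since $\MCB(\omega)\cong\MCB(1)$ and a single Cohen real unpacks into an $\MCB(\omega)$-generic; the real content, as the introduction's remark about $\omega_1$-sequences indicates, is the case of uncountable $\kappa$, where genuinely many random coordinates must be consumed.

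The engine of the construction is the Borel map $\Phi$ underlying the quoted fact, which produces a Cohen real from a generic random pair, applied relativised to the models $W_\alpha$: because $(r_\alpha,s_\alpha)$ is a genuine random pair over $W_\alpha$ and $V[\langle c_\beta:\beta<\alpha\rangle]\subseteq W_\alpha$, a real extracted from $(r_\alpha,s_\alpha)$ by $\Phi$ is Cohen over the model generated by the earlier $c_\beta$. A first attempt thus extracts each $c_\alpha$ from its own pair, yielding the \emph{forward chain} that every new Cohen real is Cohen over its predecessors. The whole theorem then reduces to the density lemma: for each maximal antichain $A\in V$ of $\MCB(\kappa)$, the filter generated by $\langle c_\alpha\rangle$ meets $A$. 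Equivalently, sending a Cohen condition $p$ to the positive-measure event $e(p)=\{\text{random points}:p\subseteq\bigcup_\alpha c_\alpha\}$ defines an embedding of $\MCB(\kappa)$ into the random algebra whose only clause requiring proof is that $e$ carry maximal antichains to predense sets; that is, that the set of random points whose extracted sequence misses $A$ be null.

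Proving this density lemma is the step I expect to be genuinely hard, and it is here that \emph{both} batches of randoms are essential, since the forward chain alone does not suffice. The obstruction is that no pair-by-pair extraction can work: if each $c_\alpha$ came from one fixed Borel map applied to independent pairs $(r_\alpha,s_\alpha)$, the array would be i.i.d.\ in some ground-model law $\nu$, so by the law of large numbers its empirical statistics (say the density of $\alpha$ with $c_\alpha(0)=1$) would converge to a value in $V$ — whereas a $\MCB(\kappa)$-generic sequence must meet dense sets forcing arbitrarily long homogeneous runs, so that such averages oscillate. This is why the naive sequences singled out in the introduction fail, and it forces the $c_\alpha$ to be \emph{correlated} across coordinates rather than read off locally. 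The resolution I would pursue is to arrange the extraction so that, along the countable support $T_A$, the conditions realised below $\langle c_\alpha\rangle$ are driven into $A$ with conditional probability tending to one: revealing the random coordinates in $T_A$ block by block and using at each block both that the fresh random pair makes the next Cohen value generic over everything revealed so far, and that the two batches together are not $\omega^\omega$-bounding — unlike either batch alone — which is what lets the running condition be pushed all the way into the given antichain rather than merely approached, one shows by a Fubini/fusion estimate that the probability of permanently missing $A$ is zero. Hence the bad set is $V$-null; as the generic array avoids every $V$-coded null set, $\langle c_\alpha\rangle$ meets every maximal antichain $A\in V$, and therefore generates an $\MCB(\kappa)$-generic filter over $V$.
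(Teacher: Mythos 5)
Your proposal goes wrong at its central claim, the purported law-of-large-numbers obstruction to ``pair-by-pair extraction.'' That obstruction is fallacious, and in fact the paper's proof is exactly a pair-by-pair extraction: it sets $c_\alpha = r_\alpha + s_\alpha$ (coordinatewise sum mod $2$) and proves that $\langle c_\alpha : \alpha<\kappa\rangle$ is $\MCB(\kappa)$-generic. Your argument tacitly assumes that the $\MRB(\kappa)\times\MRB(\kappa)$-generic pair $(x,y)$ avoids every $V$-coded null Borel subset of the product space $2^{\kappa\times\omega}\times 2^{\kappa\times\omega}$; this is what is needed to conclude that the generic array satisfies the i.i.d.\ strong law, which is a $V$-coded full-measure event of the plane. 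That assumption is false: the product of two random algebras is not the random algebra of the product space, and the generic pair for the product is not a random point of the plane over $V$. The usual Fubini argument breaks down because the second coordinate is generic only for $\MRB(\kappa)^V$, not for the random algebra computed in the intermediate extension, so null sections coded in $V[x]$ need not be avoided. Indeed your obstruction proves too much: applied with a single pair it would show that the empirical densities of $r+s$ converge to $1/2$, hence that $r+s$ cannot be Cohen --- contradicting the classical Bartoszynski--Judah fact that you yourself quote and build on. The correct moral of your computation is the reverse one: precisely because $r+s$ is Cohen, the generic pair $(r,s)$ must land inside the $V$-coded plane null set on which the strong law for the sum fails; ``random times random'' generics simply do not obey ground-model measure-theoretic laws that couple the two coordinates.

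Because the obstruction is illusory, the replacement machinery you sketch (correlated extraction, block-by-block revelation along countable supports, a ``Fubini/fusion estimate,'' non-$\omega^\omega$-boundingness of the product) is aimed at a non-problem, and in any case it is only a heuristic: the key claim, that the probability of permanently missing a given maximal antichain is zero, is never proved. Your auxiliary assertion that each $(r_\alpha,s_\alpha)$ is $\MRB(1)\times\MRB(1)$-generic over $V[\langle r_\beta,s_\beta : \beta\neq\alpha\rangle]$ would also need an argument, since $\MRB(\kappa)$ does not factor as a product of random algebras over disjoint coordinate sets (it factors as an iteration). What the paper actually supplies, and what your proposal is missing, is the one genuinely needed analytic ingredient: a Steinhaus-type theorem for $2^{\kappa\times\omega}$, namely that $S+T$ contains a basic open set $[s_p]$ whenever $S,T$ are non-null Borel. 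Granting this, genericity of $\langle r_\alpha+s_\alpha\rangle$ is a short density argument in $\MRB(\kappa)\times\MRB(\kappa)$: given a condition $([S],[T])$ and a dense open $D\subseteq\MCB(\kappa)$ in $V$, one finds $p$ with $[s_p]\subseteq S+T$, shrinks $S$ and $T$ to non-null fibers above suitable finite restrictions, picks $q\in D$ extending $p$, realizes $q$ inside the shrunken sum by a further application of Steinhaus, and observes that the resulting condition forces the sum-array to extend $q$. No countable-support reduction, no fusion, and no chain of intermediate models is needed.
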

In Section \ref{forcings}, we briefly review the forcing notions  $\MCB(\k)$ and $\MRB(\k)$. Then in Section \ref{steinhaus}, we state some results from analysis which are needed for the proof of above theorem and in Section \ref{proof of main theorem}, we give a proof of Theorem \ref{main theorem}.

\section{Cohen and Random  forcings}
\label{forcings}
In this section we briefly review the forcing notions  $\MCB(\k)$ and $\MRB(\k)$, and present some of their properties.
\subsection{Cohen forcing}
Let $I$ be a non-empty set. The forcing notion $\MCB(I)$, the Cohen forcing for adding $|I|$-many Cohen
reals is defined by
\begin{center}
$\MCB(I)=\{p: I \times \omega \to 2: |p|< \aleph_0                      \}$,
\end{center}
which is ordered by reverse inclusion.
\begin{lemma}
\label{chain condition lemma for cohen}
 $\MCB(I)$ is c.c.c.
\end{lemma}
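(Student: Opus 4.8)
The plan is to establish the countable chain condition by showing that $\MCB(I)$ contains no uncountable antichain, and the tool for this is the $\Delta$-system lemma. First I would fix an arbitrary uncountable family $\{p_\alpha : \alpha < \omega_1\}$ of conditions and aim to extract two compatible members from it. This suffices: if every uncountable subset of $\MCB(I)$ contains a pair of compatible conditions, then no antichain can be uncountable, which is exactly the c.c.c.

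The key structural observation is that each $p_\alpha$ is a finite partial function, so each domain $\dom(p_\alpha)$ is a finite subset of $I \times \omega$. Hence $\{\dom(p_\alpha) : \alpha < \omega_1\}$ is an uncountable family of finite sets, and the $\Delta$-system lemma yields an uncountable $A \subseteq \omega_1$ together with a finite root $r \subseteq I \times \omega$ such that $\dom(p_\alpha) \intersect \dom(p_\beta) = r$ for all distinct $\alpha,\beta \in A$. Now I would run a pigeonhole argument on the root: since $r$ is finite there are only $2^{|r|} < \aleph_0$ functions from $r$ to $2$, so among the uncountably many restrictions $\{p_\alpha \restricted r : \alpha \in A\}$ a single function $s\func r \to 2$ must recur uncountably often. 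Fix an uncountable $B \subseteq A$ with $p_\alpha \restricted r = s$ for every $\alpha \in B$.

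It then remains to check that any two distinct $\alpha,\beta \in B$ are compatible. On the overlap of their domains, which is exactly $r$, both $p_\alpha$ and $p_\beta$ agree with $s$; off the root their domains are disjoint by the $\Delta$-system property. Consequently $p_\alpha \union p_\beta$ is a well-defined finite partial function from $I \times \omega$ to $2$, i.e.\ a condition in $\MCB(I)$ extending both, which witnesses their compatibility and completes the argument.

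I expect the only genuine content to be the appeal to the $\Delta$-system lemma; the remaining steps—the pigeonhole on the finite root and the verification that two functions agreeing on the intersection of their domains admit a common extension—are entirely routine. The argument is, of course, the standard proof that any poset of finite partial functions with finite-range values is c.c.c.
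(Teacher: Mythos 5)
Your proof is correct and complete: the $\Delta$-system argument on the domains, followed by the pigeonhole on the finitely many functions from the root to $2$, is exactly the standard proof of this fact. The paper itself states the lemma without proof, treating it as well known, so your argument supplies precisely what the paper takes for granted.
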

Assume $G$ is $\MCB(I)$-generic over $V$, and set $F=\bigcup G: I \times \omega \to 2.$ For each $i \in I$ set $c_i: \omega \to 2$
be defined by $c_i(n)=F(i, n).$ Then:
\begin{lemma}
For  each $i \in I, c_i \in 2^\omega$
is a new real and for $i \neq j$ in $I, c_i \neq c_j$. Further, $V[G]=V[\langle  c_i: i \in I    \rangle].$
\end{lemma}
The reals $c_i$ are called Cohen reals. By $\kappa$-Cohen reals over $V$, we mean a sequence  $\langle c_i: i< \kappa  \rangle $
which is $\MCB(\kappa)$-generic over $V$.

\subsection{Random forcing}
In this subsection we briefly review random forcing.
Suppose $I$ is a non-empty set and consider the product measure space $2^{I \times \omega}$ with the standard product measure $\mu_I$ on it.  Let $\mathbb{B}(I)$ denote the class of Borel subsets of $2^{I \times \omega}$.
Recall that $\mathbb{B}(I)$ is the $\sigma$-algebra generated by the basic open sets
\[
[s_p]=\{ x \in  2^{I \times \omega}: x \supseteq p              \},
\]
where $p \in \MCB(I)$. Also $\mu_I([s_p])= 2^{-|p|}$.

For Borel sets $S, T \in \mathbb{B}(I)$ set
\[
S \sim T \iff S \bigtriangleup T \text{~is null,~}
\]
where $S \bigtriangleup T$ denotes the symmetric difference of $S$ and $T$. The relation $\sim$ is easily seen to be an equivalence relation
on $\mathbb{B}(I).$
Then $\MRB(I)$, the forcing for adding $|I|$-many random reals, is defined as

\[
\MRB(I)= \mathbb{B}(I) / \sim.
\]
Thus elements of $\MRB(I)$ are equivalent classes $[S]$ of Borel sets modulo null sets. The order relation is defined by
\[
[S] \leq [T] \iff \mu(S \setminus T) =0.
\]
The following fact is standard.
\begin{lemma}
\label{chain condition lemma for random}
$\MRB(I)$ is c.c.c.
\end{lemma}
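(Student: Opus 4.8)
The plan is to reduce the c.c.c.\ of $\MRB(I)$ to a purely measure-theoretic counting argument, exploiting the fact that $\mu_I$ is a probability measure on $2^{I \times \omega}$ with $\mu_I(2^{I \times \omega}) = 1$.

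First I would pin down what incompatibility means in the measure algebra. Given $[S], [T] \in \MRB(I)$, the class $[S \intersect T]$ is a lower bound of both, and is in fact their greatest lower bound: any $[U]$ below both $[S]$ and $[T]$ satisfies $\mu_I(U \setminus S) = \mu_I(U \setminus T) = 0$, hence $\mu_I(U \setminus (S \intersect T)) = 0$, i.e.\ $[U] \le [S \intersect T]$. Consequently $[S]$ and $[T]$ are compatible precisely when $[S \intersect T]$ is not the zero element, that is, when $\mu_I(S \intersect T) > 0$; equivalently, $[S] \incompatible [T]$ iff $\mu_I(S \intersect T) = 0$. Thus an antichain in $\MRB(I)$ corresponds to a family $\set{S_\alpha}_{\alpha \in A}$ of Borel sets of positive measure that are pairwise almost disjoint, meaning $\mu_I(S_\alpha \intersect S_\beta) = 0$ for $\alpha \ne \beta$.

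The heart of the matter is then the standard observation that such a family must be countable. For each positive integer $n$, I would set $A_n = \setof{\alpha \in A}{\mu_I(S_\alpha) \ge 1/n}$. If $A_n$ contained distinct indices $\alpha_1, \dots, \alpha_{n+1}$, then by almost-disjointness and finite additivity
\[
\mu_I\Bigl(\,\bigunion_{i=1}^{n+1} S_{\alpha_i}\Bigr) = \sum_{i=1}^{n+1} \mu_I(S_{\alpha_i}) \ge \frac{n+1}{n} > 1,
\]
contradicting $\mu_I(2^{I \times \omega}) = 1$. Hence $\card{A_n} \le n$. Since every $S_\alpha$ has positive measure, each $\alpha$ lies in some $A_n$, so $A = \bigunion_{n \ge 1} A_n$ is a countable union of finite sets and therefore countable.

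I do not anticipate a genuine obstacle here, as the lemma is classical; the only point requiring a little care is the first step, namely verifying that the order on the measure algebra makes incompatibility coincide exactly with almost-disjointness, precisely because the forcing is defined on $\sim$-equivalence classes rather than on the Borel sets themselves. Everything after that is elementary finite additivity in a probability space.
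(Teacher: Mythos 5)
Your proof is correct. The paper offers no proof of this lemma at all (it is simply cited as ``standard''), and your argument --- first checking that $[S\cap T]$ is the greatest lower bound so that incompatibility of $[S]$ and $[T]$ coincides with $\mu_I(S\cap T)=0$, and then bounding, for each $n$, the number of pairwise almost disjoint Borel sets of measure at least $1/n$ by $n$ in the probability space $2^{I\times\omega}$ --- is precisely the standard argument the paper is implicitly invoking, with the one delicate point (that the order on $\sim$-classes makes incompatibility equal almost disjointness) handled correctly.
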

Using the above lemma, we can easily show that $\MRB(I)$ is in fact a complete Boolean algebra.
Let $\lusim{F}$
  be an $\MRB(I)$-name for a function from $I \times \omega$ to $2$ such that for each $i \in I, n \in \omega$ and $k < 2,$ $\parallel  \lusim{F}(i, n) =k     \parallel_{\MRB(I)} = p_k^{i, n},$ where
\[
p_k^{i, n}= [x \in 2^{I \times \omega}: x(i, n)=k].
\]
This defines $\MRB(I)$-names $\lusim{r}_i \in 2^\omega, i \in I,$ such that
\[
\parallel \forall n < \omega,~\lusim{r}_i(n)= \lusim{F}(i, n)\parallel_{\MRB(I)} =1_{\MRB(I)}= [2^{I \times \omega}].
\]
\begin{lemma}
Assume $G$ is $\MRB(I)$-generic over $V$, and for each $i \in I$ set $r_i = \lusim{r}_i[G].$ Then each $r_i \in 2^\omega$
is a new real and for $i \neq j$ in $I, r_i \neq r_j$. Further, $V[G]=V[\langle  r_i: i \in I    \rangle].$
\end{lemma}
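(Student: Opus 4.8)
The plan is to deduce all three assertions from a single identity. Writing $x_G = \lusim{F}[G] \in 2^{I \times \om}$ for the generic point, so that $x_G(i,n) = r_i(n)$, I claim that for every Borel set $S \in \mathbb{B}(I)$ one has $\parallel x_G \in S \parallel_{\MRB(I)} = [S]$. Indeed, both $S \mapsto \parallel x_G \in S \parallel_{\MRB(I)}$ and $S \mapsto [S]$ are $\sigma$-complete Boolean homomorphisms from $\mathbb{B}(I)$ to $\MRB(I)$; by the definition of $\lusim{F}$ they agree on the generating clopen sets $[s_p]$, since
\[
\parallel x_G \in [s_p] \parallel_{\MRB(I)} = \bigwedge_{(i,n) \in \dom p} p_{p(i,n)}^{i,n} = [s_p],
\]
and hence they agree on the whole $\sigma$-algebra these sets generate.

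Granting the identity, the first two assertions are short measure computations. For $s \in 2^\om \cap V$ the ``column'' set $S_s = \{x \in 2^{I \times \om} : \forall n\ (x(i,n) = s(n))\}$ is null, so $\parallel r_i = \check{s} \parallel_{\MRB(I)} = \parallel x_G \in S_s \parallel_{\MRB(I)} = [S_s] = 0_{\MRB(I)}$; thus $r_i \neq s$ holds in $V[G]$. As $s$ ranges over $2^\om \cap V$ this shows $r_i \notin V$, so $r_i$ is a new element of $2^\om$. Similarly, for $i \neq j$ the ``diagonal'' $S_{ij} = \{x : \forall n\ (x(i,n) = x(j,n))\}$ is null, whence $\parallel r_i = r_j \parallel_{\MRB(I)} = [S_{ij}] = 0_{\MRB(I)}$ and $r_i \neq r_j$ in $V[G]$.

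For the equality of models, the inclusion $V[\langle r_i : i \in I \rangle] \sse V[G]$ is immediate, since each $r_i = \lusim{r}_i[G] \in V[G]$. For the reverse inclusion I recover $G$ from the reals. Since $\MRB(I)$ is a complete Boolean algebra, the forcing theorem gives, for each $[S] \in \MRB(I)$, the equivalences $[S] \in G \iff \parallel x_G \in S \parallel_{\MRB(I)} \in G \iff V[G] \models x_G \in S$, so that
\[
G = \{ [S] \in \MRB(I) : x_G \in S \},
\]
membership being decided from a fixed $V$-code of $S$. Now $x_G$ is computed from $\langle r_i : i \in I \rangle$ by $x_G(i,n) = r_i(n)$, so $x_G \in V[\langle r_i : i \in I \rangle]$; and whether $x_G$ lies in the Borel set given by a code $c \in V$ is absolute between $V[\langle r_i : i \in I \rangle]$ and $V[G]$. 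Hence the displayed set can be formed inside $V[\langle r_i : i \in I \rangle]$, giving $G \in V[\langle r_i : i \in I \rangle]$ and therefore $V[G] \sse V[\langle r_i : i \in I \rangle]$.

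The main obstacle is this last recovery of $G$: proving the identity $\parallel x_G \in S \parallel_{\MRB(I)} = [S]$ for all Borel $S$ (a $\sigma$-homomorphism, or monotone-class, argument starting from the clopen generators) and then invoking Borel absoluteness to guarantee that the defining condition ``$x_G \in S$'' is evaluated the same way in the smaller model. The claims that $r_i$ is new and that the $r_i$ are distinct are by comparison routine, resting only on the fact that the relevant coincidence sets are null.
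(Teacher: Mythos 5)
Your proposal is correct, but there is essentially nothing in the paper to compare it against: the paper states this lemma without proof, as a standard fact about random forcing (just as it does for its Cohen analogue). Your argument is the standard one, and all three steps are sound: the identity $\parallel x_G \in S \parallel_{\MRB(I)} = [S]$ follows exactly as you say, since both maps are $\sigma$-complete homomorphisms agreeing on the generators $[s_p]$, and $\mathbb{B}(I)$ is by definition the $\sigma$-algebra these generate; the nullity of the coincidence sets gives newness and distinctness; and the recovery $G = \{[S] : x_G \in S\}$ gives $V[G] \sse V[\langle r_i : i \in I \rangle]$. Two small points deserve to be made explicit when $I$ is uncountable. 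First, every member of $\mathbb{B}(I)$ has countable support (the sets depending on countably many coordinates form a $\sigma$-algebra containing the generators), so each $S$ may be regarded as a Borel subset of $2^{A \times \om}$ for some countable $A \sse I$; this is what licenses both the existence of $V$-codes for $S$ and the $\Delta^1_1$-absoluteness of ``$x_G \in S$'' between $V[\langle r_i : i \in I \rangle]$ and $V[G]$ that your last step invokes. Second, in forming $G$ inside $V[\langle r_i : i \in I \rangle]$ one should note that the truth value of ``$x_G \in S$'' does not depend on the choice of representative $S$ of the class $[S]$: this is because $x_G$ avoids every $V$-coded null set $N$, as $\parallel x_G \in N \parallel_{\MRB(I)} = [N] = 0_{\MRB(I)}$ --- which is itself an instance of your identity. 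With these remarks your proof is complete.
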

The reals $r_i$ are called random reals. By $\kappa$-random reals over $V$, we mean a sequence  $\langle r_i: i< \kappa  \rangle $
which is $\MRB(\kappa)$-generic over $V$.

\section{Some results from analysis}
\label{steinhaus}
A famous theorem of Steinhaus \cite{steinhaus} from $\text{1920}$ asserts that if $A, B \subseteq \mathbb{R}^n$ are measurable sets with positive
Lebesgue measure, then  $A + B$ has an interior point; see also \cite{stromberg}.  Here, we
need a version of Steinhaus theorem  for  the space
$2^{\k \times \omega}$.

For $S, T \subseteq 2^{\k \times \omega}$, set $S+T=\{x+y: x \in S$ and $ y \in T              \}$, where
$x+y: \k \times \omega \to 2$ is defined by
$$(x+y)(\a,n) = x(\a,n)+y(\a,n)~(\text{mod} ~2).$$
Note that the above addition is continuous.
\begin{lemma}
\label{steinhaus lemma 1}
Suppose $S \subseteq 2^{\k \times \omega}$ is Borel and non-null. Then $S-S$ contains an open set around the zero function $0$.
\end{lemma}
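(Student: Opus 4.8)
The plan is to realize $2^{\k \times \om}$ as the compact abelian group $(\mathbb{Z}/2)^{\k\times\om}$ under the coordinatewise addition already defined, to observe that $\mu$ is invariant under each translation $x \mapsto x+g$ (such a map permutes the two values in every coordinate and hence preserves the product measure), and to study the autocorrelation $\phi(g) = \mu\big(S \cap (S+g)\big)$, where $S + g = \set{x+g : x \in S}$. Since every coordinate lives in $\mathbb{Z}/2$ we have $-g = g$, so $S - S = S + S$; the point of $\phi$ is that whenever $\phi(g) > 0$ the set $S \cap (S+g)$ is nonempty, so there are $z, s \in S$ with $z = s+g$, giving $g = z + s \in S + S = S - S$. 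Thus it suffices to produce an open neighbourhood $U$ of $0$ on which $\phi$ is strictly positive; then $U \subseteq S - S$. Note $\phi(0) = \mu(S) > 0$, so the whole content is the behaviour of $\phi$ for $g$ near $0$.

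I would first settle the case where $S$ is clopen, i.e.\ a finite Boolean combination of basic sets $[s_p]$; such a set depends on only finitely many coordinates, say those in a finite $F \subseteq \k \times \om$. If $g$ satisfies $g \restriction F \equiv 0$, then $x$ and $x+g$ agree on $F$, so $x + g \in S \iff x \in S$; hence $S + g = S$ and $\phi(g) = \mu(S)$ is constant on the basic neighbourhood $U_F = \set{g : g\restriction F \equiv 0}$ of $0$. This already gives the lemma for clopen $S$.

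For a general non-null Borel $S$, I would approximate. Because the clopen sets form an algebra generating $\mathbb{B}(\k)$, the standard measure-approximation theorem gives, for any $\gee > 0$, a clopen $C$ with $\mu(S \bigtriangleup C) < \gee$; choose $\gee = \mu(S)/3$ and let $F$ be a finite support of $C$. Using translation invariance, $\mu\big((S+g)\bigtriangleup(C+g)\big) = \mu(S\bigtriangleup C) < \gee$ for every $g$, and for $g \in U_F$ we have $C + g = C$ by the clopen case. A routine estimate then bounds the defect of $S \cap (S+g)$ against $C \cap (C+g) = C$: since $\big(C \cap (C+g)\big) \setminus \big(S \cap (S+g)\big) \subseteq (C\setminus S) \cup \big((C+g)\setminus(S+g)\big)$, we get $\phi(g) = \mu\big(S \cap (S+g)\big) \ge \mu(C) - 2\mu(S\bigtriangleup C) \ge \mu(S) - 3\mu(S\bigtriangleup C) > 0$ for all $g \in U_F$. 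Taking $U = U_F$ completes the proof.

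I expect the only real obstacle to be organizing this last estimate so that translation invariance and the clopen approximation combine cleanly; in particular one must record the measure-approximation step for the $\sigma$-algebra $\mathbb{B}(\k)$, which holds uniformly in the cardinal $\k$ because it rests only on the fact that the clopen sets form a generating algebra. This is exactly what lets the argument bypass the usual appeal to $L^2$-continuity of translation, which is delicate for uncountable $\k$, where $2^{\k\times\om}$ is not metrizable.
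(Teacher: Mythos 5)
Your proof is correct, but it takes a genuinely different route from the paper's. The paper follows Stromberg's classical argument: replace $S$ by a compact subset of positive measure, choose an open $U \supseteq S$ with $\mu(U) < 2\mu(S)$, use continuity of addition (together with compactness of $S$) to find an open $V \ni 0$ with $V + S \subseteq U$, and then conclude $V \subseteq S-S$ by a pigeonhole argument: if $x \in V$ and $(x+S) \cap S = \emptyset$, then $(x+S) \cup S \subseteq U$ would force $\mu(U) \geq 2\mu(S)$. Your argument instead exploits the group structure and the generating algebra of clopen (finitely supported) sets: approximate $S$ by a clopen $C$ with support $F$ and $\mu(S \bigtriangleup C) < \mu(S)/3$, observe that any $g$ vanishing on $F$ satisfies $C+g=C$, and deduce via translation invariance that $\mu\bigl(S \cap (S+g)\bigr) \geq \mu(S) - 3\mu(S \bigtriangleup C) > 0$ on the basic neighbourhood $\{g : g \upharpoonright F \equiv 0\}$; your estimate and the choice of $\mu(S)/3$ check out. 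What your approach buys is exactly what you point out at the end: it needs only the approximation theorem for the $\sigma$-algebra generated by an algebra (valid for any $\k$, since $\mathbb{B}(\k)$ is by definition generated by the sets $[s_p]$), whereas the paper's proof quietly invokes inner regularity by compact sets and outer regularity by open sets for the product measure on the non-metrizable space $2^{\k \times \om}$ when $\k$ is uncountable — facts that are true for Baire measures on compact Hausdorff spaces but require their own justification. What the paper's approach buys is brevity and a direct citation of the classical reference; it also delivers the slightly stronger geometric picture ($V+S$ trapped inside $U$), though for the application in the paper (Lemma 3.2 and the genericity argument) your version of the conclusion is equally usable.
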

\begin{proof}
We follow \cite{stromberg}. Set $\mu=\mu_\k$ be the product measure on $2^{\k \times \omega}$. As $S$ is Borel and non-null, there is a compact subset of $S$ of positive $\mu$-measure, so may suppose that $S$ itself is compact. Let $U \supseteq S$
be an open set with $\mu(U) < 2\cdot \mu(S).$
By continuity of addition, we can find an open set $V$ containing the zero function $0$ such that $V+S \subseteq U.$

We show that $V \subseteq S - S$. Thus suppose $x \in V.$ Then $(x+S) \cap S \neq \emptyset,$ as otherwise we will have
$(x+S) \cup S \subseteq U,$ and hence $\mu(U) \geq 2\cdot \mu(S),$ which is in contradiction with our choice of $U$.
Thus let $y_1, y_2 \in S$ be such that $x+ y_1 = y_2.$ Then $x= y_2 - y_1 \in S - S$ as required.
\end{proof}
Similarly, we have the following:
\begin{lemma}
\label{steinhaus lemma}
Suppose $S, T \subseteq 2^{\k \times \omega}$ are Borel and non-null. Then $S+T$ contains an open set.
\end{lemma}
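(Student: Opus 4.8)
The plan is to adapt the measure-packing idea of Lemma \ref{steinhaus lemma 1}, but since we now have two distinct sets there is no distinguished point playing the role of $0$; instead I would locate a suitable point ``$s_0+t_0$'' and show that an entire basic clopen neighbourhood of it lies in $S+T$. First, exactly as in the previous proof, inner regularity lets me assume $S$ and $T$ are compact of positive measure. Next I would reduce to countably many coordinates: every set in $\mathbb{B}(\k)$ depends on only countably many coordinates (the sets with this property form a $\sigma$-algebra containing each generator $[s_p]$, hence containing all of $\mathbb{B}(\k)$), so there is a countable $C \sse \k \times \om$ and Borel sets $S_0, T_0 \sse 2^C$ with $S = \pi_C^{-1}(S_0)$ and $T = \pi_C^{-1}(T_0)$, where $\pi_C$ is the projection. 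A short computation using that every element of $2^{\k\times\om}$ is its own inverse shows $S+T = \pi_C^{-1}(S_0+T_0)$, so it suffices to find an open subset of $S_0+T_0$ inside the Cantor space $2^C \cong 2^\om$ and pull it back along the continuous map $\pi_C$.

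Working now in $2^C$, I would first show that a positive-measure Borel set meets some basic clopen cylinder in relative measure exceeding $\frac12$: if not, covering the set by a disjoint union of cylinders of total measure close to its own (outer regularity together with second countability) would force its measure to be at most half of itself, hence $0$. Applying this to $S_0$ and to $T_0$ yields cylinders in which each has relative measure $>\frac12$; refining both to a common finite domain $F$ and averaging, I obtain cylinders $N_s, N_t$ of the same level $F$ (determined by $s,t \in 2^F$) in which $S_0$, respectively $T_0$, is relatively large. Writing $X_F$ for the tail space $2^{C \setminus F}$ with its normalized product measure, let $A, B \sse X_F$ be the sections of $S_0, T_0$ above $s$ and $t$; then $A$ and $B$ both have measure $>\frac12$ in the probability space $X_F$.

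The \emph{key step} is a pigeonhole argument. I claim the cylinder $N_{s+t}$ determined by $s+t \in 2^F$ is contained in $S_0+T_0$. Given $z \in N_{s+t}$, write $z$ as $s+t$ on $F$ followed by a tail $w \in X_F$; since $A$ and $w+B$ both have measure $>\frac12$ in $X_F$, and $w+B$ is a translate of $B$, their measures sum to more than $1$, so they must intersect, giving $u \in A$ with $u = w+v$ for some $v \in B$. Then $x := s \append u \in S_0$ and $y := t \append v \in T_0$ satisfy $x+y = (s+t)\append(u+v) = (s+t)\append w = z$, using $u+v = w$ by the exponent-$2$ identity, so $z \in S_0+T_0$. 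Thus $N_{s+t}$ is the desired open set. The \textbf{main obstacle} is the step forcing the two high-density cylinders to share a common level $F$: otherwise the two tail spaces would not coincide and the pigeonhole would fail; the averaging/refinement argument is what resolves it, while the reduction to countably many coordinates is what makes the underlying regularity and covering facts available when $\k$ is uncountable.
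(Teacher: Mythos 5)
Your proof is correct, but it follows a genuinely different route from the paper. The paper disposes of this lemma with the single word ``Similarly'': the intended argument is the Stromberg-style compactness proof of Lemma \ref{steinhaus lemma 1} (pass to compact sets, enclose in an open set of slightly larger measure, use continuity of addition and translation invariance to contradict disjointness). Note that this transfer is not quite verbatim: with two different sets one first needs a translate $a$ with $\mu\bigl(S \cap (a+T)\bigr)>0$ (e.g.\ by Fubini or a density argument), after which Lemma \ref{steinhaus lemma 1} applied to $S'=S\cap(a+T)$ finishes, since $S'+S' \sse a+(S+T)$; the paper leaves this entirely to the reader. Your argument is instead self-contained: you reduce to countably many coordinates --- legitimate here precisely because the paper defines $\mathbb{B}(\k)$ as the $\sigma$-algebra generated by the cylinders $[s_p]$, so every member is $\pi_C^{-1}$ of a Borel subset of $2^C$ for countable $C$ --- and then run a density/pigeonhole argument in Cantor space: cylinders of a common level $F$ in which $S_0$ and $T_0$ have relative measure exceeding $\tfrac12$, so that for every tail $w$ the sections $A$ and $w+B$ have measures summing to more than $1$ and hence meet, putting all of $N_{s+t}$ inside $S_0+T_0$. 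All the steps check out: the covering argument for density $>\tfrac12$, the averaging down to a common level $F$, translation invariance of the tail measure, and the exponent-$2$ identity $u+v=w$. What your route buys: no appeal to compactness or to the unstated Fubini step, an explicit basic clopen set $[s_p]\sse S+T$ (exactly the form invoked after the lemma in Section \ref{proof of main theorem}), and transparent handling of uncountable $\k$. What the paper's route buys: brevity, and an argument that works for topologically Borel sets rather than only for the cylinder-generated $\sigma$-algebra. One small tidy-up: your initial reduction to compact $S,T$ is superfluous (you never use compactness afterwards), and performing it \emph{before} the coordinate reduction is slightly delicate, since a compact set produced by inner regularity must be seen to be Baire (e.g.\ a zero set) before you may claim it depends on countably many coordinates; it is cleaner to drop that step or to reduce to $2^C$ first.
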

Suppose $S, T \subseteq 2^{\k \times \omega}$ are Borel and non-null. It follows from Lemma \ref{steinhaus lemma} that for some $p \in \MCB(\k),$
$[s_p] \subseteq S+T.$ Thus, by continuity of the addition, we can find
$x \in S$ and $y \in T$ such that:
\begin{itemize}
\item $(x+y) \upharpoonright \dom(p) = p.$
\item The sets $S \cap [s_{x \upharpoonright \dom(p)}]$ and $T \cap [s_{y \upharpoonright \dom(p)}]$ are Borel and non-null.
\end{itemize}

\section{Proof of Theorem \ref{main theorem}}
\label{proof of main theorem}
In this section, we complete the proof of Theorem \ref{main theorem}. Thus force with $\MRB(\k) \times \MRB(\k)$
and let $G \times H$ be generic over $V$. Let $\langle \langle r_\a: \a < \k \rangle, \langle  s_\a: \a < \kappa   \rangle \rangle$
be the sequence of random reals added by $G \times H.$

For $\a < \k$ set $c_\a=r_\a+s_\a$. The following completes the proof:
\begin{lemma}
The sequence $\langle  c_\a: \a < \k      \rangle$ is a sequence of $\kappa$-Cohen reals over $V$.
\end{lemma}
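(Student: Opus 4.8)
The plan is to show directly that the $\MCB(\k)$-filter determined by $\langle c_\a:\a<\k\rangle$ meets every dense open set of $\MCB(\k)$ lying in $V$, and to reduce this to a density argument inside $\MRB(\k)\times\MRB(\k)$ that is powered by the Steinhaus-type Lemma~\ref{steinhaus lemma}. First I would set up names: let $\lusim{X}$ and $\lusim{Y}$ be the $\MRB(\k)$-names for the two random points of $2^{\k\times\omega}$ added by the two factors, so that $\lusim{X}(\a,n)=\lusim{r}_\a(n)$ and $\lusim{Y}(\a,n)=\lusim{s}_\a(n)$, and put $\lusim{z}=\lusim{X}+\lusim{Y}$. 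Then $\lusim{z}(\a,n)=\lusim{c}_\a(n)$, so the filter on $\MCB(\k)$ determined by $\langle c_\a\rangle$ is $\{p\in\MCB(\k): p\subseteq z\}$, where $z=\lusim{z}[G\times H]$. Since a filter on $\MCB(\k)$ is generic over $V$ exactly when it meets every dense open $D\in V$, it is enough to fix such a $D$ and prove that
$$ D^* = \{([S],[T]) : \exists\, p\in D,\ ([S],[T])\forces p\subseteq\lusim{z}\} $$
is dense in $\MRB(\k)\times\MRB(\k)$; genericity of $G\times H$ then produces $p\in D$ with $p\subseteq z$, as required.

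To establish density of $D^*$, I would fix an arbitrary condition $([S],[T])$, where $S,T$ are Borel and non-null (every nonzero condition of $\MRB(\k)$ is of this form). By Lemma~\ref{steinhaus lemma}, $S+T$ contains a basic open set $[s_q]$, and since $D$ is dense I may extend $q$ to some $p\in D$, so that $[s_p]\subseteq[s_q]\subseteq S+T$. Applying the remark following Lemma~\ref{steinhaus lemma} to this particular $p$ yields points $x\in S$ and $y\in T$ with $(x+y)\upharpoonright\dom(p)=p$ for which $S'=S\cap[s_{x\upharpoonright\dom(p)}]$ and $T'=T\cap[s_{y\upharpoonright\dom(p)}]$ are again Borel and non-null.

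Now $([S'],[T'])\leq([S],[T])$, and I claim this condition forces $p\subseteq\lusim{z}$, which places it in $D^*$. Indeed, below $[S']$ the first random point satisfies $\lusim{X}\in S'\subseteq[s_{x\upharpoonright\dom(p)}]$ (the standard property $[S']\forces\lusim{X}\in S'$ of random forcing), hence $\lusim{X}\upharpoonright\dom(p)=x\upharpoonright\dom(p)$; symmetrically $\lusim{Y}\upharpoonright\dom(p)=y\upharpoonright\dom(p)$ below $[T']$. Adding, $\lusim{z}\upharpoonright\dom(p)=(x+y)\upharpoonright\dom(p)=p$, so $([S'],[T'])\forces p\subseteq\lusim{z}$. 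This proves $D^*$ is dense and finishes the argument.

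The step carrying the real content is the passage from $[s_p]\subseteq S+T$ to the existence of $x\in S$ and $y\in T$ whose sum realizes $p$ and which localize to non-null pieces $S',T'$; this is precisely what the remark after Lemma~\ref{steinhaus lemma} provides, via Steinhaus together with a Lebesgue-density argument, so the task in this lemma is to assemble those ingredients into the density argument above. The remaining facts I would invoke — that nonzero conditions of $\MRB(\k)$ are represented by non-null Borel sets, that the random point lies in every ground-model Borel set whose class belongs to the generic, and that meeting all dense open sets suffices for genericity — are all routine.
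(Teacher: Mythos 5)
Your proposal follows the paper's strategy: the reduction to a density statement in $\MRB(\k)\times\MRB(\k)$ is exactly the paper's condition $(*)$, and the density is discharged by Lemma \ref{steinhaus lemma} together with the remark following it. What differs is the order of operations, and the difference is worth recording. The paper first applies the remark to $S,T$ (producing a condition $p$ realized by $x,y$ with non-null localizations $S', T'$), then picks $q\in D$ with $q\le p$, and then must realize this $q$ by a \emph{second} application of the remark, this time to $S',T'$. You instead apply Lemma \ref{steinhaus lemma} first, shrink the resulting basic open subset of $S+T$ into $D$ using density, and realize the resulting $p\in D$ by a single application of the remark. Your ordering is tidier and in one respect sounder: your realization step is applied to a $p$ for which the inclusion $[s_p]\subseteq S+T$ is actually known, whereas the paper's second realization is applied to a $q$ for which the corresponding inclusion $[s_q]\subseteq S'+T'$ is never established (Steinhaus applied to $S',T'$ yields an open set, but not one that $q$ need enter). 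Your verification that $([S'],[T'])\Vdash p\subseteq\lusim{z}$ is also spelled out more carefully than the paper's ``it is now clear''.

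One caveat, which applies to your write-up and to the paper alike: the remark after Lemma \ref{steinhaus lemma} is existential --- it produces \emph{some} $p$ with $[s_p]\subseteq S+T$ together with realizing points --- while you invoke it ``for this particular $p$'', i.e.\ in the universal form that \emph{every} $p$ with $[s_p]\subseteq S+T$ admits $x\in S$, $y\in T$ with $(x+y)\restriction\dom(p)=p$ and non-null localizations. That universal form is false. Fix a coordinate $i\in\k\times\omega$, split the remaining coordinates into two infinite sets $A_0,A_1$, and let $C=\{x: x\restriction A_0\equiv 0\}\cup\{x: x\restriction A_1\equiv 0\}$, a closed null set with $C+C$ equal to the whole space; put $S=\{x: x(i)=0\}\cup\{x: x(i)=1 \text{ and } x \text{ restricted to the other coordinates lies in } C\}$ and let $T$ be defined identically. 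Then $S+T=2^{\k\times\omega}$, so $[s_p]\subseteq S+T$ for the condition $p$ with domain $\{i\}$ and value $1$; yet any $x\in S$, $y\in T$ with $(x+y)(i)=1$ forces one of the two points into the null part, so one of the two localizations is null. The repair fits your argument verbatim: prove, via the Lebesgue density theorem, the stronger remark that there is a condition $p_0$ (namely $p_0=u+v$, where $\dom(u)=\dom(v)$ and $S$, $T$ have relative measure $>\frac{3}{4}$ in $[s_u]$, $[s_v]$ respectively) such that \emph{every} $q\le p_0$ can be realized with non-null localizations --- for a given $q$, average the relative measures over the matching pairs of extensions $u',v'$ with $u'+v'=q$ to find a pair where both are non-null. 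Then choose your $p\in D$ below $p_0$, rather than below the condition handed to you by Lemma \ref{steinhaus lemma}. With that single adjustment your proof is complete; the paper's own proof requires the same strengthening at its second realization step.
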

\begin{proof}
It suffices to prove the following:

$\hspace{1.5cm}$ For every $([S], [T])\in {\MRB}(\k) \times \MRB(\k)$,
and every open dense subset $D\in V$

$(*)$ $\hspace{0.9cm}$  of ${\MCB}(\k)$,~ there is
$([\ov{S}], [\ov{T}])\leq ([S], [T])$ such~ that $([\ov{S}], [\ov{T}]) \vdash
``  \lan \lusim{c}_\a : \a\in \k\ran$

$\hspace{1.5cm}$  extends some element of $D$''.

Thus fix $([S], [T])\in {\MRB}(\k) \times \MRB(\k)$ and $D \in V$ as above, where $S, T \subseteq 2^{\k \times \omega}$ are Borel and non-null.
By Lemma \ref{steinhaus lemma} and the remarks after it, we can find
$p \in \MCB(\k)$ and $(x, y) \in S \times T$ such that:
\begin{enumerate}
\item $[s_p] \subseteq S+T.$
\item $(x+y) \upharpoonright \dom(p) = p.$
\item The sets $S \cap [s_{x \upharpoonright \dom(p)}]$ and $T \cap [s_{y \upharpoonright \dom(p)}]$ are Borel and non-null.
\end{enumerate}
Now let $q \in D$ be such that
\begin{center}
$([S \cap [s_{x \upharpoonright \dom(p)}]], [T \cap [s_{y \upharpoonright \dom(p)}]]) \Vdash$``$q \leq_{\MCB(\k)} p$''.
\end{center}
Using continuity of the addition and further application of
Lemma \ref{steinhaus lemma} and the remarks after it, we can find
 $x', y' $ such that:
\begin{enumerate}
\item [(4)] $x' \in S \cap [s_{x \upharpoonright \dom(p)}]$ and $y' \in T \cap [s_{y \upharpoonright \dom(p)}].$
\item [(5)] $(x'+y') \upharpoonright \dom(q) = q.$
\item [(6)] The sets $S \cap [s_{x' \upharpoonright \dom(q)}]$ and $T \cap [s_{y' \upharpoonright \dom(q)}]$ are Borel and non-null.
\end{enumerate}
It is now clear that
\begin{center}
$([S \cap [s_{x' \upharpoonright \dom(q)}]], [T \cap [s_{y' \upharpoonright \dom(q)}]]) \Vdash$``$ \lan \lusim{c}_\a : \a\in \k\ran$ extends $q$''.
\end{center}
The result follows.
\end{proof}




Mohammad Golshani,
School of Mathematics, Institute for Research in Fundamental Sciences (IPM), P.O. Box:
19395-5746, Tehran-Iran.

E-mail address: golshani.m@gmail.com

http://math.ipm.ac.ir/golshani/

\end{document}